\theoremstyle{definition}
\newtheorem{defn}{Definition}[section]
\newtheorem{rem}[defn]{Remark}
\theoremstyle{plain}
\newtheorem{thm}[defn]{Theorem}
\newtheorem{prop}[defn]{Proposition}
\newtheorem{lem}[defn]{Lemma}
\newtheorem{cor}[defn]{Corollary}
\newtheorem{question}[defn]{Question}
\newcommand{\KH}{\operatorname{KH}}
\numberwithin{equation}{section}
\title[]{The Rasmussen invariant, four-genus and three-genus of an almost positive knot are equal}
\author{Keiji Tagami}
\date{\today}
\address{
Department of Mathematics,
Tokyo Institute of Technology,
Oh-okayama, Meguro, Tokyo 152-8551, Japan
}
\email{tagami.k.aa@m.titech.ac.jp}
\begin{document}
\maketitle
\begin{abstract}
An oriented link is positive if it has a link diagram whose crossings are all positive. 
An oriented link is almost positive if it is not positive and has a link diagram with exactly one negative crossing. 
It is known that the Rasmussen invariant, $4$-genus and $3$-genus of a positive knot are equal. 
In this paper, we prove that the Rasmussen invariant, $4$-genus and $3$-genus of an almost positive knot are equal. 
Moreover, we determine the Rasmussen invariant of an almost positive knot in terms of its almost positive knot diagram. 
As corollaries, we prove that any almost positive knot is not homogeneous, and there is no almost positive knot of $4$-genus one. 
\end{abstract}
\section{Introduction}
An oriented link is {\it positive} if it has a link diagram whose crossings are all positive. 
For positive links, there are many studies. 
For example, Rudolph \cite{positive_strong} and Nakamura \cite[Lemma~$4.1$]{nakamura1} proved that every positive link is strongly quasipositive, 
Cromwell \cite[Corollary~$2.1$]{homogeneous} proved that a positive link has positive Conway polynomial (that is, all the coefficients of the polynomial are not negative) 
and Przytycki \cite[Theorem~$1$]{negative_signature} proved that all nontrivial positive links have negative signatures. 
\par
An oriented link is {\it almost positive} if it is not positive and has a link diagram with exactly one negative crossing. 
Such a diagram is called {\it almost positive diagram}. 
It is known that almost positive links have many properties similar to those of positive links. 
For instance, Cromwell \cite[Corollary~$2.2$]{homogeneous} also proved that any almost positive link has positive Conway polynomial, 
while Przytycki and Taniyama \cite[Corollary~$1.7$]{almost_negative_signatire} proved that almost positive links have negative signatures. 
Moreover, many examples of almost positive links are strongly quasipositive (we do not know whether all almost positive links are strongly quasipositive). 
\par
In \cite{rasmussen1}, Rasmussen introduced a knot invariant, called the Rasmussen invariant, which gives a lower bound of the $4$-genus. 
He proved that the Rasmussen invariant, $4$-genus and $3$-genus are equal for a positive knot \cite[Theorem~$4$]{rasmussen1}. 
\par
In this paper, we prove the following theorem: 
\begin{thm}\label{main}
Let $K$ be an almost positive knot. Then we obtain 
\begin{align*}
s(K)=2g_{4}(K)=2g_{3}(K), 
\end{align*}
where $s(K)$, $g_{4}(K)$ and $g_{3}(K)$ are the Rasmussen invariant, $4$-genus and $3$-genus of $K$, respectively. 
\end{thm}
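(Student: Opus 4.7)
Let $D$ be an almost positive diagram of $K$ with $c_+$ positive crossings, a single negative crossing $p$, and $s(D)$ Seifert circles. Applying Seifert's algorithm produces an orientable surface of genus $(c_+-s(D)+2)/2$, and combined with the chain $s(K)\le 2g_4(K)\le 2g_3(K)$ we immediately obtain
\[
s(K)\le 2g_4(K)\le 2g_3(K)\le c_+-s(D)+2.
\]
The theorem therefore reduces to establishing the single lower bound $s(K)\ge c_+-s(D)+2$; proving this simultaneously pins down the diagrammatic formula for $s(K)$ promised in the abstract.

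The natural comparison object is the positive knot $K^+$ obtained from $K$ by flipping $p$ to a positive crossing. Its diagram $D^+$ is positive and has the same $s(D)$ Seifert circles, so Rasmussen's theorem for positive knots yields $s(K^+)=c_+-s(D)+2$. The Khovanov/Lee cubes of $D$ and $D^+$ share the same underlying bigraded module; only the homological and quantum shifts depend on the sign of $p$. Inspecting these shifts shows that the oriented canonical generator $\mathfrak{s}_o$ for $D$ sits two quantum gradings \emph{below} its counterpart for $D^+$, so the most direct attempt to read off $s(K)$ from $\mathfrak{s}_o$ only gives the weak bound $s(K)\ge c_+-s(D)$. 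My plan is to recover the missing $+2$ by finding a cohomologous Lee cycle at higher quantum grading, supported partly on the other resolution of $p$, using positivity of all remaining crossings to kill the obstructions that would otherwise drag $s(K)$ down.

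The main obstacle is exactly this missing $+2$. The generic crossing-change inequality gives only $s(K)\ge s(K^+)-2=c_+-s(D)$, and since Rasmussen invariants of knots are even the only alternative to the desired value is $s(K)=c_+-s(D)$; ruling this out requires a genuine use of the almost-positive hypothesis, not merely the knowledge that $K$ and $K^+$ differ by one crossing change. I anticipate a case split according to whether the two strands at $p$ lie on the same Seifert circle of $D$ or on two distinct ones. In each case the diagram $D_0$ obtained by smoothing $p$ in the oriented way is positive, so Rasmussen's theorem applies to its components and constrains the Lee homology near the vertex of the cube corresponding to the $0$-smoothing of $p$. Splicing this local information into the Lee complex of $D$, and comparing with $D^+$ via the shift relation above, should produce a Lee cycle of quantum grading $c_+-s(D)+2$ cohomologous to $\mathfrak{s}_o+\mathfrak{s}_{\bar o}$, thereby yielding the sought-after lower bound $s(K)\ge c_+-s(D)+2$ and with it both the theorem and the diagrammatic formula for $s(K)$.
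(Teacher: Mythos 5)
Your reduction of the theorem to the single inequality $s(K)\ge c_+-s(D)+2=2g_3(D)$ is not valid, and this is the central gap. The canonical Seifert surface of an almost positive diagram need not realize the genus: by Stoimenow's result (Theorem~3.1 of the paper), if some positive crossing of $D$ joins the same two Seifert circles as the negative crossing $p$, then $g_3(K)=g_3(D)-1$, and correspondingly $s(K)=2g_3(D)-2$, so the lower bound you propose to prove is simply false in that case. (The paper only \emph{conjectures}, in a remark, that this second case forces $K$ to be positive; you cannot assume it is vacuous for almost positive knots.) Any correct proof must split into the two cases of Stoimenow's dichotomy --- whether or not another crossing joins the \emph{same pair of Seifert circles} as $p$ --- which is not the case split you anticipate (whether the two strands at $p$ lie on the same Seifert circle of $D$). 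In the second case the theorem follows by squeezing, with no Lee-theoretic input at all: $2g_3(D)-2=s(K^+)-2\le s(K)\le 2g_4(K)\le 2g_3(K)=2g_3(D)-2$, using Stoimenow's genus formula.

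In the first case your target $s(K)=2g_3(D)$ is correct, but the step that would deliver it --- splicing local Lee homology information from the $0$-resolution of $p$ to produce a cycle cohomologous to $\mathfrak{s}_o+\mathfrak{s}_{\bar o}$ at quantum grading $2g_3(D)+1$ --- is only announced (``should produce''), not carried out, and it is exactly where all the difficulty lives. The paper's mechanism is different and more elementary: since $s(K)\in\{s(K^+),s(K^+)-2\}$, it suffices to rule out $s(K)=2g_3(D)-2$, and this is done by showing $\KH^{0,\,2g_3(D)-3}(K)=0$ via a direct computation in the cube of resolutions (the relevant graded piece of $H^{1,-s+1}(D)$ is $\mathbf{Q}\{x^{\otimes s}\}$ modulo the image of $x^{\otimes (s-1)}$ under the comultiplication at $p$, and this image is everything precisely because no other crossing joins the two Seifert circles that $p$ joins), combined with the standard fact that $\KH^{0,s(K)-1}(K)\neq 0$. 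To salvage your approach you would need either to carry out this vanishing computation or to give an actual construction of the asserted Lee cycle; as written, the proposal establishes only $s(K)\ge 2g_3(D)-2$, which is the trivial crossing-change bound.
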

Moreover, we compute the Rasmussen invariant of a knot represented by an almost positive diagram from the diagram (Theorem~$\ref{main2}$). 
%
%
%
%
%
From Theorem~$\ref{main}$, we notice that the Rasmussen invariant of an almost positive knot is positive (which is an analogous property to the signature by Przytycki and Taniyama \cite{almost_negative_signatire}). 
\par
As corollaries of Theorem~$\ref{main}$, we obtain the following results (Corollaries~$\ref{cor1}$ and $\ref{cor2}$): 
\begin{cor}\label{cor1}
Any almost positive knot is not homogeneous. 
\end{cor}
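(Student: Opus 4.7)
The plan is to argue by contradiction. Suppose $K$ is almost positive and also homogeneous, and fix an almost positive diagram $D$ of $K$. The key external ingredient is the formula, due to Abe, that for any homogeneous knot the Rasmussen invariant equals the negative of the (classical) signature:
\[
s(K) \;=\; -\sigma(K).
\]
Combining this with Theorem~\ref{main} immediately forces $-\sigma(K) = 2g_{3}(K)$, so the proof reduces to showing that this equality cannot hold for any almost positive knot.

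To rule it out, I would read both sides of this would-be equality off the diagram $D$. The right-hand side $2g_{3}(K) = s(K)$ is computed from the crossings and Seifert circles of $D$ via Theorem~\ref{main2}. The left-hand side is controlled by the Przytycki--Taniyama signature estimate \cite{almost_negative_signatire} for almost positive links, which bounds $|\sigma(K)|$ from above in terms of the same combinatorial data on $D$. The plan is then to compare the two expressions term by term and extract a strict gap; the single negative crossing of $D$ ``costs'' Seifert genus in the formula from Theorem~\ref{main2} in a way that the signature bound cannot match, so $2g_{3}(K) > -\sigma(K)$, contradicting the equality above.

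The main obstacle is the second step: one needs the Przytycki--Taniyama inequality to be sharp enough in the relevant combinatorial data on $D$ to produce a genuine gap of at least $2$ between $2g_{3}(K)$ and $-\sigma(K)$. If the form of their inequality in \cite{almost_negative_signatire} is not strong enough off the shelf, the backup is a direct Gordon--Litherland computation on the Seifert surface obtained by Seifert's algorithm from $D$: because this surface has genus exactly $g_{3}(K)+1$ (as a consequence of Theorem~\ref{main} combined with Theorem~\ref{main2}), its Goeritz/Seifert pairing has predictable extra positive eigenvalues forced by the isolated negative crossing, which is precisely what is needed to separate $-\sigma(K)$ from $2g_{3}(K)$ and complete the contradiction.
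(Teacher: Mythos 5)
Your argument rests on the claim that Abe's theorem gives $s(K)=-\sigma(K)$ for every homogeneous knot. That is not what Abe proved, and the identity is false: it holds for alternating knots (Rasmussen), but not for homogeneous knots in general. For example, the torus knot $T(5,6)$ is a positive braid knot, hence positive, hence homogeneous, and satisfies $s=2g_{3}=20$ while $\sigma=-16$, so $s\neq-\sigma$. Abe's actual result \cite{abe2} is a Bennequin-type equality computing $s$ from a homogeneous \emph{diagram}, and the consequence the paper needs (from the proof of Theorem~$1.3$ in \cite{abe2}) is: a homogeneous knot $K$ with $s(K)=2g_{4}(K)=2g_{3}(K)$ must be a \emph{positive} knot. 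With that statement, Corollary~\ref{cor1} is immediate from Theorem~\ref{main}: an almost positive knot satisfies $s=2g_{4}=2g_{3}$, so if it were homogeneous it would be positive, contradicting the definition of almost positive. No signature estimates are needed at all.

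Even setting aside the wrong starting identity, the rest of your plan does not close. The comparison of $2g_{3}(K)$ with the Przytycki--Taniyama signature bound is only sketched, and you yourself flag that the inequality may not produce the required gap of $2$; a proof cannot be conditional on that. Your fallback also contains an error: you assert that the canonical Seifert surface of the almost positive diagram $D$ has genus exactly $g_{3}(K)+1$, but by Theorem~\ref{stoimenow1} this happens only in case $(2)$; in case $(1)$ the canonical surface already realizes $g_{3}(K)$, so the ``extra positive eigenvalues'' you want to extract need not exist. I recommend replacing the whole argument with the two-line deduction from Abe's positivity criterion described above.
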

%
%
%
%
\begin{cor}[negative answer to Question~$7.1$ in \cite{stoimenow2}]\label{cor2}
There is no almost positive knot of $4$-genus (or unknotting number) one. 
\end{cor}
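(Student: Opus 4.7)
The plan is to argue by contradiction. Since $u(K)\geq g_4(K)$ for every knot, and every almost positive knot is nontrivial (the unknot has a diagram with no crossings, hence is positive by definition), assuming $u(K)=1$ forces $g_4(K)=1$. It therefore suffices to rule out the case $g_4(K)=1$ for an almost positive knot $K$.

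Suppose such a $K$ exists. Theorem~\ref{main} then supplies $g_3(K)=1$ and $s(K)=2$. Fix an almost positive diagram $D$ of $K$, with $c$ crossings (exactly one negative) and $o$ Seifert circles. The canonical Seifert surface from Seifert's algorithm applied to $D$ has genus $(c-o+1)/2$ and gives an upper bound on $g_3(K)$, so $c-o\geq 1$. I would then feed the explicit formula of Theorem~\ref{main2}, which computes $s(K)$ directly from $D$, into the equality $s(K)=2$, thereby pinning $c-o$ down to its minimum admissible value.

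Two scenarios arise. If Theorem~\ref{main2} already forces $s(K)\geq 4$ for every almost positive knot, then the contradiction with $s(K)=2$ is immediate. Otherwise, I would enumerate the few almost positive diagrams $D$ satisfying the derived numerical constraints on $(c,o)$, and show that each such $D$ either simplifies by Reidemeister moves that cancel the unique negative crossing against a neighbouring positive one (producing a positive diagram of $K$, contradicting almost positivity), or else represents the unknot (contradicting $g_3(K)=1$). The main obstacle is precisely this last step: confirming that the numerical pinch of Theorem~\ref{main2} is tight enough that every surviving candidate diagram is either reducible to a positive diagram or represents the unknot, so that the assumption of genuine almost positivity with $g_4(K)=1$ is untenable.
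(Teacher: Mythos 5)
Your reduction is fine as far as it goes: $u(K)\geq g_4(K)$ and the nontriviality of almost positive knots correctly reduce the unknotting-number claim to the $4$-genus claim, and Theorem~\ref{main} correctly converts the assumption $g_4(K)=1$ into $g_3(K)=1$ and $s(K)=2$. This first half coincides with the paper's argument. But the second half has a genuine gap. The paper finishes in one line by citing Stoimenow's Theorem~$7.1$ in \cite{stoimenow2}, which states that there is no almost positive knot of $3$-genus one; combined with $g_4(K)=g_3(K)$ from Theorem~\ref{main}, this is the whole proof. You do not invoke that result. Instead you propose to rule out $g_3(K)=1$ yourself by pinning down $g_3(D)$ via Theorem~\ref{main2} (which gives $g_3(D)=1$ in case~(1) and $g_3(D)=2$ in case~(2)) and then ``enumerating the few almost positive diagrams'' satisfying these constraints.

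That enumeration is not finite and is not carried out. Fixing the canonical genus $g_3(D)$ does not bound the crossing number: there are infinitely many almost positive diagrams with $g_3(D)=1$ or $2$ (already the twist knots give genus-one diagrams with arbitrarily many crossings). Taming this infinite family is exactly the content of Stoimenow's generator theory for canonical genus, and it is precisely what he uses to prove the theorem the paper cites; it is not a routine case check. So the step you yourself flag as ``the main obstacle'' is the entire difficulty, and as written the proposal does not close it. The fix is simply to replace your enumeration with the citation of \cite[Theorem~$7.1$]{stoimenow2}: no almost positive knot has $3$-genus one, hence by Theorem~\ref{main} none has $4$-genus one, and hence none has unknotting number one.
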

The proofs of Theorem~$\ref{main}$, and Corollaries~$\ref{cor1}$ and $\ref{cor2}$ are given in Section~$\ref{section_main}$. 
\par
%
Homogeneous links are introduced by Cromwell \cite{homogeneous} (see also \cite{baader}, \cite{banks1} and \cite{homogeneous-seifert}). 
From its definition, any positive knot is homogeneous. 
The Rasmussen invariants of homogeneous knots are determined by Abe \cite{abe2} in terms of their diagrams. 
Theorems~$\ref{main}$ and $\ref{main2}$, and Corollary~$\ref{cor1}$ give us a new class of non-homogeneous knots whose Rasmussen invariants are well understood. 
Furthermore, it immediately follows from Corollary~$\ref{cor1}$ that the homogeneity of knots is a serious difference between positive knots and almost positive knots. 
%
\par
Let $B_{n}$ be the $n$-string braid group with the canonical generators $\{\sigma_{i} \}_{i=1}^{n-1}$. 
A {\it positive band} is any conjugate $w\sigma_{i} w^{-1}$ ($w\in B_{n}$, $1\leq i\leq n-1$) and a {\it positive embedded band} is one of the positive bands $\sigma_{i,j}:=(\sigma_{i}\cdots \sigma_{j-2})\sigma_{j-1}(\sigma_{i}\cdots \sigma_{j-2})^{-1}$ ($1\leq i<j\leq n-1$). 
A {\it (strongly) quasipositive braid} is a product of some positive (embedded) bands and a {\it (strongly) quasipositive link} is an oriented link realized by the closure of a (strongly) quasipositive braid. 
%
Shumakovitch \cite[Proposition~$1$.F]{shumakovitch2} proved that for a strongly quasipositive knot $K$, we obtain $s(K)=2g_{4}(K)=2g_{3}(K)$. 
Hence, Theorem~$\ref{main}$ is evidence towards an affirmative answer to the following question given by Stoimenow \cite{stoimenow1}: 
\begin{question}{\cite[Question~$4$]{stoimenow1}}
Is any almost positive link strongly quasipositive, or at least quasipositive?
\end{question}
\par
This paper is organized as follows: 
In Section~$\ref{def_kh}$, we recall the definition of Khovanov homology and give the key property of the Rasmussen invariant needed to prove our theorem. 
In Section~$\ref{section_main}$, we prove Theorem~$\ref{main}$, and Corollaries~$\ref{cor1}$ and $\ref{cor2}$. 
\section{Khovanov homology}\label{def_kh}
In this section, we recall the definition of (rational) Khovanov homology. 
Let $L$ be an oriented link. 
Take a diagram $D$ of $L$ and an ordering of the crossings of $D$. 
For each crossing of $D$, we define $0$-smoothing and $1$-smoothing as in Figure~$\ref{smoothing}$. 
A smoothing of $D$ is a diagram where each crossing of $D$ is changed to either its $0$-smoothing or $1$-smoothing. 
\begin{figure}[!h]
\begin{center}
\includegraphics[scale=0.5]{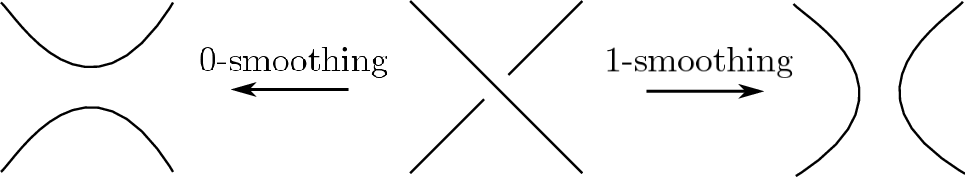}
\end{center}
\caption{0-smoothing and 1-smoothing. }
\label{smoothing}
\end{figure}
Let $n$ be the number of the crossings of $D$. Then $D$ has $2^{n}$ smoothings. 
By using the given ordering of the crossings of $D$, we have a natural bijection between the set of smoothings of $D$ and the set $\{0, 1\}^{n}$, where, to any $\varepsilon =(\varepsilon _{1}, \dots, \varepsilon _{n})\in \{0, 1\}^{n}$, we associate the smoothing $D_{\varepsilon }$ where the $i$-th crossing of $D$ is $\varepsilon_{i}$-smoothed. 
Each smoothing $D_{\varepsilon }$ is a collection of disjoint circles. 
\par
Let $V$ be a graded free $\mathbf{Q}$-module generated by $1$ and $X$ with $\operatorname{deg}(1)=1$ and $\operatorname{deg}(X)=-1$. 
Let $k_{\varepsilon }$ be the number of the circles of the smoothing $D_{\varepsilon }$. 
Put $M_{\varepsilon }=V^{\otimes k_{\varepsilon }}$. 
The module $M_{\varepsilon }$ has a graded module structure, that is, for $v=v_{1}\otimes\cdots\otimes v_{k_{\varepsilon }}\in M_{\varepsilon }$, $\deg(v):=\deg(v_{1})+\cdots+\deg(v_{k_{\varepsilon }})$. 
Then define 
\begin{align*}
C^{i}(D)&:=\bigoplus_{|\varepsilon |=i }M_{\varepsilon }\{i\},  
\end{align*}
where $|\varepsilon |=\sum_{i=1}^{m}\varepsilon _{i}$. 
Here, $M_{\varepsilon}\{i\}$ denotes $M_{\varepsilon}$ with its gradings shifted by $i$ (for a graded module $M=\bigoplus_{j\in\mathbf{Z}}M^{j}$ and an integer $i$, we define the graded module $M\{i\}=\bigoplus_{j\in\mathbf{Z}}M\{i\}^{j}$ by $M\{i\}^{j}=M^{j-i}$). 
\par
The differential map $d^{i}\colon C^{i}(D)\rightarrow C^{i+1}(D)$ is defined as follows. 
Fix an ordering of the circles for each smoothing $D_{\varepsilon }$ and associate the $i$-th tensor factor of $M_{\varepsilon }$ to the $i$-th circle of $D_{\varepsilon }$.  
Take elements $\varepsilon$ and $\varepsilon ' \in \{0, 1\}^{n}$ such that $\varepsilon _{j}=0$ and $\varepsilon' _{j}=1$ for some $j$ and that $\varepsilon _{i}=\varepsilon' _{i}$ for any $i\neq j$. 
For such a pair $(\varepsilon , \varepsilon ')$, we will define a map $d_{\varepsilon \rightarrow \varepsilon '}\colon M_{\varepsilon }\rightarrow M_{\varepsilon '}$. 
\par
In the case where two circles of $D_{\varepsilon }$ merge into one circle of $D_{\varepsilon' }$,  the map $d_{\varepsilon \rightarrow \varepsilon '}$ is the identity on all factors except the tensor factors corresponding to the merged circles where it is a multiplication map $m\colon V\otimes V\rightarrow V$ given by: 
\begin{center}
$m(1\otimes 1)=1$,\  $m(1\otimes X)=m(X\otimes 1)=X$,\  $m(X\otimes X)=0$. 
\end{center}
\par
In the case where one circle of $D_{\varepsilon }$ splits into two circles of $D_{\varepsilon' }$,  the map $d_{\varepsilon \rightarrow \varepsilon '}$ is the identity on all factors except the tensor factor corresponding to the split circle where it is a comultiplication map $\Delta \colon V\rightarrow V\otimes V$ given by:
\begin{center}
$\Delta (1)=1\otimes X+X\otimes 1$,\  $\Delta (X)=X\otimes X$. 
\end{center}
\par
If there exist distinct integers $i$ and $j$ such that $\varepsilon _{i}\neq\varepsilon '_{i}$ and that $\varepsilon _{j}\neq\varepsilon '_{j}$, then define $d_{\varepsilon \rightarrow \varepsilon '}=0$. 
\par
In this setting, we define a map $d^{i}\colon C^{i}(D)\rightarrow C^{i+1}(D)$ by $\sum_{|\varepsilon|=i}d_{\varepsilon}^{i}$, where $d_{\varepsilon}^{i}\colon M_{\varepsilon}\rightarrow C^{i+1}(D)$ is defined by 
\begin{align*}
d^{i}_{\varepsilon}(v):=\sum_{|\varepsilon'|=i+1 }(-1)^{l(\varepsilon, \varepsilon' )}d_{\varepsilon \rightarrow \varepsilon '}(v).   
\end{align*}
Here $v\in M_{\varepsilon }\subset C^{i}(D)$ and $l(\varepsilon, \varepsilon')$ is the number of $1$'s in front of (in our order) 
the factor of $\varepsilon$ which is different from $\varepsilon'$. 
\par
We can check that ($C^{i}(D)$, $d^{i}$) is a cochain complex and we denote its $i$-th homology group by $H^{i}(D)$. 
We call these the unnormalized Khovanov homology of $D$. 
Since the map $d^{i}$ preserves the grading of $C^{i}(D)$, the group $H^{i}(D)$ has a graded structure $H^{i}(D)=\bigoplus_{j\in\mathbf{Z}}H^{i,j}(D)$ induced by that of $C^{i}(D)$. 
For any link diagram $D$, we define its Khovanov homology $\KH^{i, j}(D)$ by 
\begin{center}
$\KH^{i, j}(D)=H^{i+n_{-}, j-n_{+}+2n_{-}}(D)$, 
\end{center}
where $n_{+}$ and $n_{-}$ are the number of the positive and negative crossings of $D$, respectively. 
The grading $i$ is called the homological degree and $j$ is called the $q$-grading. 
\begin{thm}[\cite{Bar-Natan-1}, \cite{khovanov1}]
Let $L$ be an oriented link and $D$ a diagram of $L$. Then $\KH(L):=\KH(D)$ is a link invariant. 
Moreover, the graded Euler characteristic of the homology $\KH(L)$ equals the Jones polynomial of $L$, that is, 
\begin{align*}
V_{L}(t)=(q+q^{-1})^{-1}\sum_{i, j\in\mathbf{Z}}(-1)^{i}q^{j}\dim_{\mathbf{Q}}{\KH^{i, j}(L)}\Big|_{q=-t^{\frac{1}{2}}}, 
\end{align*}
where $V_{L}(t)$ is the Jones polynomial of $L$.  
\end{thm}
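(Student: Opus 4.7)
The plan is to establish the two assertions separately: first, invariance of the complex $(C^{\ast}(D), d^{\ast})$ under Reidemeister moves up to chain homotopy equivalence, which gives link invariance of $\KH$; and second, a chain-level computation of the graded Euler characteristic, identifying it with the Jones polynomial. For invariance, I would treat each Reidemeister move in turn, producing in each case an explicit chain homotopy equivalence between the complexes before and after the move. The most efficient route is the cancellation (or Gaussian elimination) principle: whenever $C^{\ast}(D)$ contains an acyclic summand produced by an identity piece of the differential, that summand can be excised without altering the chain homotopy type, and the remaining complex can be identified with $C^{\ast}(D')$ for a simpler diagram $D'$.

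For Reidemeister~I, the kink contributes one extra crossing whose two smoothings differ by either merging in or splitting off a small disjoint circle. The corresponding local piece of $C^{\ast}(D)$ admits an acyclic summand isomorphic to $V \xrightarrow{\id} V$ (up to grading shift), and its complement is naturally isomorphic to $C^{\ast}(D')$ where $D'$ is $D$ with the kink removed; this is precisely the source of the shift $i \mapsto i + n_{-}$, $j \mapsto j - n_{+} + 2n_{-}$ in the definition of $\KH$. Reidemeister~II is analogous: among the four local smoothings, two sit in an acyclic pair joined by an identity arrow, and the other two reconstruct the complex of the diagram without the crossing pair. Reidemeister~III then follows either by a direct comparison of the two eight-term local complexes, showing each deformation retracts to a common smaller model, or, more cleanly, by applying R-II locally to reduce one configuration to the other.

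For the Euler characteristic, since each $C^{i}(D)$ is a finite-dimensional graded $\mathbf{Q}$-vector space, the alternating sum of the graded dimensions of the homology equals that of the chain groups. The graded dimension of $V$ is $q + q^{-1}$, so $M_{\varepsilon}\{|\varepsilon|\}$ contributes $q^{|\varepsilon|}(q+q^{-1})^{k_{\varepsilon}}$, giving
\begin{align*}
\sum_{i,j}(-1)^{i} q^{j} \dim_{\mathbf{Q}} H^{i,j}(D)
&= \sum_{\varepsilon \in \{0,1\}^{n}} (-q)^{|\varepsilon|} (q+q^{-1})^{k_{\varepsilon}}.
\end{align*}
Up to the overall grading shift by $n_{+}, n_{-}$ in the definition of $\KH$, this is exactly the Kauffman bracket state sum for $D$; substituting $q = -t^{\frac{1}{2}}$ and dividing by $q+q^{-1}$ then recovers the normalization of the Jones polynomial $V_{L}(t)$.

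The principal obstacle is the verification of Reidemeister~III invariance: the two local complexes each have eight smoothings and the chain homotopy equivalence between them is not short to write out by hand. Once R-I and R-II have been recast via the cancellation principle, however, the R-III verification becomes a finite sequence of routine identifications of Frobenius-algebra composites rather than a genuine conceptual difficulty.
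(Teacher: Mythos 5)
The paper does not prove this theorem at all: it is quoted as a known foundational result with citations to Bar-Natan and Khovanov, so there is no in-paper argument to compare against. Your sketch is, in outline, the standard proof from exactly those references: invariance of the chain homotopy type under the three Reidemeister moves via cancellation of acyclic subcomplexes (delooping plus Gaussian elimination), and the identification of the graded Euler characteristic with the Kauffman bracket state sum, which is correct as written --- the alternating sum over homology equals that over the chain groups, each $M_{\varepsilon}\{|\varepsilon|\}$ contributes $q^{|\varepsilon|}(q+q^{-1})^{k_{\varepsilon}}$, and the overall shift by $n_{\pm}$ converts the bracket into the normalized Jones polynomial after dividing by $q+q^{-1}$.

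Two cautions on the invariance half. First, your bookkeeping for Reidemeister~II is off: of the four local smoothings, the one containing the small circle is delooped into two summands, each of which cancels against one of its neighbours (the all-$0$ and all-$1$ vertices), so \emph{three} of the four vertices are excised and a single surviving vertex is identified with the complex of the simplified diagram --- not ``two cancel, two remain.'' Second, the grading shift $(i,j)\mapsto(i+n_{-},\, j-n_{+}+2n_{-})$ is not ``produced by'' the R-I cancellation; it is chosen in advance precisely so that the shifts the unnormalized homology acquires under R-I and R-II (which change $n_{+}$ or $n_{-}$) are compensated, and one must check this compensation for each move separately. Finally, you correctly flag that R-III is the substantial verification, but you do not carry it out, so as it stands the proposal is an accurate roadmap of the standard proof rather than a complete one; for the purposes of this paper that is no worse than what the author does, since the result is simply imported from the literature.
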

The following is well known result and the key property to prove Theorem~$\ref{main}$. 
\begin{prop}\label{key}
Let $K$ be an oriented knot. Then we obtain $\KH^{0, s(K)\pm 1}(K)\neq 0$, where $s(K)$ is the Rasmussen invariant of $K$. 
\end{prop}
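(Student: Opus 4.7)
The plan is to derive the statement from the structure of Lee's deformation of Khovanov homology, following Rasmussen's original approach. First I would recall Lee's chain complex: on the same underlying module as the Khovanov complex $C^{*}(D)$, Lee defined a perturbed differential $d_{Lee} = d + \Phi$ that does not preserve the $q$-grading, but only the descending filtration $\mathcal{F}^{\bullet}$ induced by $q$-degree. Lee proved that for a knot $K$, the resulting homology $H_{Lee}(K)$ is a two-dimensional $\mathbf{Q}$-vector space concentrated in homological degree zero, spanned by two canonical cycles $\mathfrak{s}_{o}$ and $\mathfrak{s}_{\bar{o}}$ indexed by the two orientations of $K$.

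Next I would invoke Rasmussen's definition of $s(K)$: the filtered $q$-degrees (with respect to $\mathcal{F}^{\bullet}$) of the cycles $\mathfrak{s}_{o} + \mathfrak{s}_{\bar{o}}$ and $\mathfrak{s}_{o} - \mathfrak{s}_{\bar{o}}$ are $s(K)-1$ and $s(K)+1$ respectively. Equivalently, the associated graded of the filtered vector space $H_{Lee}(K)$ is nontrivial in $q$-degrees $s(K)-1$ and $s(K)+1$, both concentrated in homological degree $0$.

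Finally, the filtration $\mathcal{F}^{\bullet}$ on the Lee complex yields a convergent spectral sequence whose $E_{2}$-page equals the Khovanov homology $\KH(K)$ and whose $E_{\infty}$-page equals the associated graded of $H_{Lee}(K)$. Since $E_{\infty}^{0,\,s(K)\pm 1}$ is a subquotient of $E_{2}^{0,\,s(K)\pm 1} = \KH^{0,\,s(K)\pm 1}(K)$, and the former is nonzero by the preceding paragraph, we conclude $\KH^{0,\,s(K)\pm 1}(K) \neq 0$.

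I do not anticipate a serious obstacle, since the proposition is essentially Rasmussen's construction repackaged: Lee's computation of $H_{Lee}(K)$ and the spectral sequence from Khovanov to Lee homology are standard. The only delicate point is to keep track of sign conventions and to verify that the filtration shift induced by $d_{Lee}$ matches the $q$-grading conventions used in the definition of $\KH^{i,j}$ in Section~\ref{def_kh}, but this is a routine bookkeeping step rather than a genuine difficulty.
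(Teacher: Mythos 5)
Your argument is correct and is essentially the paper's own proof: both use Lee's two-dimensional homology in homological degree $0$, Rasmussen's identification of the filtered degrees of its generators as $s(K)\pm 1$, and the spectral sequence from Khovanov homology to Lee homology to conclude that $\KH^{0,s(K)\pm 1}(K)$ must be nonzero. The only (immaterial) difference is a page-indexing convention — the paper calls Khovanov homology the $E_1$ page where you call it $E_2$.
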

\begin{proof}
The $0$-th term of the Lee homology of $K$ is generated by two elements $v_{\max}$ and $v_{\min}$ whose q-gradings are $s(K)+1$ and $s(K)-1$, respectively. It is known that there is a spectral sequence whose $E_{1}$ term is the Khovanov homology of $K$ and $E_{\infty}$ term is the Lee homology. 
From the construction of the spectral sequence, there are nonzero elements $\tilde{v}_{\max}$ and $\tilde{v}_{\min}$ in $\KH^{0}(K)$ whose q-gradings are $s(K)+1$ and $s(K)-1$, respectively. 
\end{proof}
%
\begin{rem}
As an application of Proposition~$\ref{key}$, the author \cite[Corollary~$5.3$]{tagami2} computed the Rasmussen invariant of twisted Whitehead doubles (with sufficiently many twists) of any knot. 
\end{rem}
\section{Proof of Theorem~$\ref{main}$}\label{section_main}
In this section, we prove Theorem~$\ref{main}$. 
\par
An oriented link diagram is {\it almost positive} if it has exactly one negative crossing. 
First, we introduce a result of Stoimenow, which gives a method for computing the $3$-genus of a link represented by an almost positive diagram. 
\begin{thm}{\cite[Corollary~$5$ and the proof of Theorems~$5$ and $6$]{stoimenow1}}\label{stoimenow1}
Let $D$ be an almost positive diagram of a non-split link $L$ with a negative crossing $p$. 
Denote the genus of $L$ by $g_{3}(L)$ and the genus of the Seifert surface obtained from $D$ (by Seifert's algorithm) by $g_{3}(D)$. 
\begin{enumerate}
\item If there is no (positive) crossing joining the same two Seifert circles of $D$ as the two circles which are connected by the negative crossing $p$, we have $g_{3}(L)=g_{3}(D)$ (see the left of Figure~$\ref{fig:negative}$). 
\item If there is a (positive) crossing joining the same two Seifert circles of $D$ as the two circles which are connected by the negative crossing $p$, we have $g_{3}(L)=g_{3}(D)-1$ (see the right of Figure~$\ref{fig:negative}$). 
\end{enumerate}
\end{thm}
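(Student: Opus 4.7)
The plan is to split the argument according to the two cases of the theorem. In both cases, the upper bound $g_3(L) \leq g_3(D) = \tfrac{1}{2}(c(D) - s(D) + 2 - \mu(L))$ is automatic from the Seifert surface produced by Seifert's algorithm on $D$, where $c(D)$ is the crossing number and $s(D)$ the number of Seifert circles. The content of the theorem is to sharpen this by $1$ in case~(2), and to match it by a lower bound in case~(1).

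For case~(2), I would simplify $D$ directly. Let $q$ be a positive crossing joining the same Seifert circles $C_1, C_2$ as the negative crossing $p$. The crossings between $C_1$ and $C_2$ sit in a local $2$-braid--like annular region, so an isotopy brings $p$ adjacent to some positive crossing between $C_1$ and $C_2$, and a single Reidemeister~II move cancels them. The resulting diagram $D'$ is positive with $c(D') = c(D) - 2$ and $s(D') = s(D)$, whence $g_3(D') = g_3(D) - 1$. By Cromwell's theorem that Seifert's algorithm is genus-minimizing on positive diagrams, $g_3(L) = g_3(D') = g_3(D) - 1$.

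For case~(1), I would use the Conway polynomial $\nabla_L(z)$, applying the skein relation at $p$:
\begin{equation*}
\nabla_L(z) = \nabla_{L_+}(z) - z\,\nabla_{L_0}(z).
\end{equation*}
Here $L_+$ (switching $p$ to positive) and $L_0$ (oriented smoothing at $p$) are both positive links, so by Cromwell both Conway polynomials have nonnegative coefficients and their top degrees are given by the diagrams as $c(D) - s(D) + 1$ for $\nabla_{L_+}$ and $c(D) - s(D)$ for $\nabla_{L_0}$. Thus $\nabla_{L_+}$ and $z\,\nabla_{L_0}$ meet at top degree $c(D) - s(D) + 1$, and the issue is whether the leading coefficients cancel. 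The hypothesis of case~(1)---no positive crossing of $D$ joins $C_1$ and $C_2$---translates into a combinatorial condition on the Seifert graph forcing the leading coefficient of $\nabla_{L_+}$ to strictly exceed that of $\nabla_{L_0}$ (for instance via a state-sum expansion of both polynomials controlled by the Seifert circles). Hence $\deg \nabla_L = c(D) - s(D) + 1$, and the classical inequality $\deg \nabla_L \leq 2 g_3(L) + \mu(L) - 1$ yields $g_3(L) \geq g_3(D)$.

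The principal obstacle is case~(1): proving non-cancellation of the leading term of $\nabla_L$ at degree $c(D) - s(D) + 1$. This requires a careful combinatorial analysis showing that the positive crossings of $D$, together with the Seifert-circle incidence hypothesis, control the top-degree state-sum contributions of $\nabla_{L_+}$ and $\nabla_{L_0}$ asymmetrically. The hypothesis of case~(1) is precisely what prevents collision of leading terms, and formulating this comparison cleanly is the heart of the argument. Case~(2), by contrast, reduces to a local Reidemeister~II simplification plus Cromwell's theorem, and should be routine once the $2$-braid-like structure between $C_1$ and $C_2$ is correctly identified.
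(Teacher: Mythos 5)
The paper does not actually prove Theorem~\ref{stoimenow1}; it is imported wholesale from Stoimenow \cite{stoimenow1}, so there is no internal argument to measure your proposal against, and I assess it on its own terms. Both halves of your proposal have genuine gaps.

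The serious structural gap is in case~(2). You assert that the crossings joining $C_1$ and $C_2$ ``sit in a local $2$-braid-like annular region,'' so that $p$ can be isotoped next to a positive crossing $q$ between the same two circles and cancelled by a single Reidemeister~II move. This is false in general: two crossings joining the same pair of Seifert circles need not cobound an empty bigon. The disc region bounded by the two crossing arcs and the arcs of $C_1$ and $C_2$ between them can contain further Seifert circles and crossings, and other crossings joining $C_1$ to $C_2$ may separate $p$ from every positive one in the cyclic order around the region between $C_1$ and $C_2$. No planar isotopy removes these obstructions, and sliding an attached sub-tangle past the crossing $p$ along $C_1$ is not an isotopy preserving the diagram's structure. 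Since both the upper and the lower bound of your case~(2) are routed through the reduced diagram $D'$, the whole case collapses if the move is unavailable. The argument used in the literature (and hinted at in the remark of this paper following Lemma placement, citing \cite{baader2}) is different: under the hypothesis of case~(2), $D$ is a quasipositive diagram in Baader's sense, so $L$ bounds a quasipositive surface with Euler characteristic $\chi(F_D)+2$; that surface gives $g_3(L)\le g_3(D)-1$, and Rudolph's slice--Bennequin equality for quasipositive surfaces gives $g_3(L)\ge g_4(L)= g_3(D)-1$.

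In case~(1) your strategy --- bounding $g_3(L)$ from below via $\max\deg_z\nabla_L\le 2g_3(L)+\mu(L)-1$ and computing that degree from the skein triple at $p$ --- is the standard route, but the proposal stops exactly where the theorem begins. The non-cancellation of the degree-$(c(D)-s(D)+1)$ terms of $\nabla_{L_+}$ and $z\,\nabla_{L_0}$ \emph{is} the content of the statement, and ``a combinatorial condition on the Seifert graph forcing the leading coefficient of $\nabla_{L_+}$ to strictly exceed that of $\nabla_{L_0}$'' is asserted, not established; you acknowledge this yourself. Two smaller points: if $p$ is a cut edge of the Seifert graph then $D_0$ is disconnected, $\nabla_{L_0}=0$, and the conclusion is immediate, so that subcase should be split off; and Cromwell's positivity of $\nabla$ for almost positive links \cite{homogeneous} only gives $[z^{c-s+1}]\nabla_L\ge 0$, so what must be excluded is exact equality of the two leading coefficients --- positivity alone does not do it. As written, case~(1) is reduced to an unproved combinatorial claim and case~(2) rests on a diagrammatic move that does not exist in general.
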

\begin{figure}[!h]
\begin{center}
\includegraphics[scale=0.55]{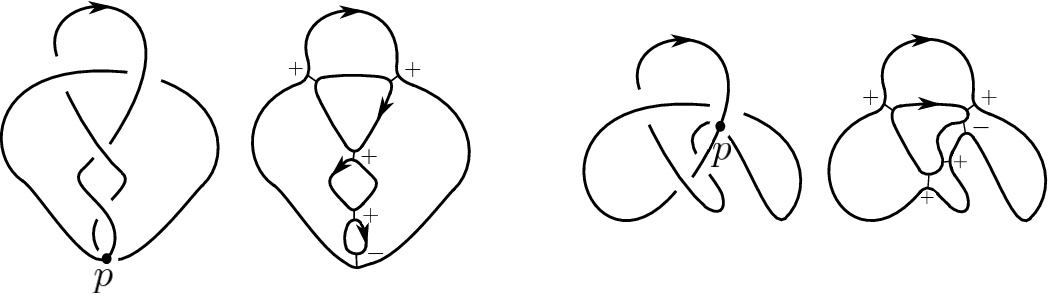}
\end{center}
\caption{In the left picture, there is no crossing joining the same two Seifert circles as the two circles which are connected by the negative crossing $p$. 
In the right picture, there is a crossing joining the same two Seifert circles as the two circles which are connected by the negative crossing $p$.}
\label{fig:negative}
\end{figure}

\begin{rem}
Stoimenow \cite[Corollary~$5$]{stoimenow1} improved the Hirasawa's result \cite{hirasawa1} which states if a canonical Seifert surface of an almost alternating diagram is compressible, the diagram has a  ``d-cycle", that is,  there is a crossing joining the same two Seifert circles as the circles which the dealternator connects. 
\end{rem}
%
\begin{rem}
Let $D$ be an almost positive link diagram. 
If there is a crossing joining the same two Seifert circles of $D$ as the  two circles which the negative crossing connects, the diagram $D$ is a {\it quasipositive diagram} introduced by Baader \cite{baader2}. 
In particular, $D$ represents a quasipositive link. 
The author expects that such an almost positive diagram $D$ represents a positive link. 
If so, we obtain that any almost positive diagram of an almost positive link has the minimal genus. 
\end{rem}
%
\par
Next, we compute the Rasmussen invariant of an oriented knot which has an almost positive diagram (such a knot is positive or almost positive). 
The Rasmussen invariant of a knot is closely related to the $0$-th term of the Khovanov homology. 
The following lemma is beneficial when we compute the Rasmussen invariant. 
\begin{lem}\label{lem1}
Let $D$ be an almost positive link diagram of a non-split link $L$ with a negative crossing $p$. 
If there is no (positive) crossing of $D$ joining the same two Seifert circles as the two circles which are connected by the negative crossing $p$, we have $\KH^{0, 2g_{3}(D)+\sharp L-4}(L)=0$, where $\sharp L$ is the number of the components of $L$. 
\end{lem}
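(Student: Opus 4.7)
The plan is to reformulate the statement on the chain level and verify it by a direct computation. Using $\KH^{i,j}(D)=H^{i+n_{-},j-n_{+}+2n_{-}}(D)$ with $n_{-}(D)=1$, together with the Seifert-genus formula $2g_{3}(D)=c(D)-s+2-\sharp L$, where $s$ denotes the number of Seifert circles of $D$, the desired equality $\KH^{0,2g_{3}(D)+\sharp L-4}(L)=0$ becomes $H^{1,1-s}(D)=0$. I would begin by making this reduction explicit.

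Let $C_{1}$ and $C_{2}$ be the two (distinct) Seifert circles incident to $p$. Since anti-Seifert-smoothing $p$ merges $C_{1}$ and $C_{2}$, the all-zero state $\varepsilon_{0}$ has $s-1$ circles, so $C^{0,1-s}(D)$ is one-dimensional and spanned by $v_{0}:=X^{\otimes(s-1)}$. For a state $\varepsilon$ with $|\varepsilon|=1$, the piece $M_{\varepsilon}\{1\}^{1-s}$ is nonzero precisely when $\varepsilon$ carries $s$ circles, in which case it is one-dimensional and spanned by $X^{\otimes s}$. A short case analysis of the Seifert-circle structure at each positive crossing, in which the hypothesis of the lemma is exactly what is needed to rule out positive crossings joining $C_{1}$ and $C_{2}$, shows that the relevant states are precisely the Seifert smoothing $\tau$ (1-smoothing only $p$) and the states $\rho_{j}$ (1-smoothing only the $j$-th positive crossing) as $j$ ranges over the set $J$ of positive crossings whose four local arcs all lie inside a single Seifert circle of $D$. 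Thus $\dim C^{1,1-s}(D)=1+|J|$.

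Next I would compute the differentials. Each bit-flip $\varepsilon_{0}\to\tau$ and $\varepsilon_{0}\to\rho_{j}$ for $j\in J$ is a circle-splitting, so the comultiplication $\Delta(X)=X\otimes X$ gives
\[
d^{0}(v_{0})=X^{\otimes s}\big|_{\tau}+\sum_{j\in J}X^{\otimes s}\big|_{\rho_{j}},
\]
which is nonzero, whence $\dim(\operatorname{im} d^{0})=1$. For $d^{1}$, the crucial observation is that, for each $j\in J$, the state $\tau^{(j)}$ obtained by 1-smoothing both $p$ and the $j$-th positive crossing has $s+1$ circles, and among the basis vectors of $C^{1,1-s}(D)$ only $X^{\otimes s}|_{\tau}$ and $X^{\otimes s}|_{\rho_{j}}$ contribute to $M_{\tau^{(j)}}\{2\}^{1-s}$, with coefficients $+1$ and $-1$ respectively (the sign coming from $(-1)^{l(\varepsilon,\varepsilon')}$ once $p$ is placed last in the ordering). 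Projecting $d^{1}$ onto $\bigoplus_{j\in J}M_{\tau^{(j)}}\{2\}^{1-s}$ yields a map of rank $|J|$, so $\dim\ker(d^{1}|_{C^{1,1-s}})\leq 1$. Combined with $\operatorname{im} d^{0}\subseteq\ker d^{1}$, this forces $H^{1,1-s}(D)=0$.

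I expect the main obstacle to be the circle-counting case analysis underlying the description of $J$ and of the target state $\tau^{(j)}$, and in particular making transparent where the hypothesis of the lemma enters. If a positive crossing did join $C_{1}$ and $C_{2}$, the corresponding $\rho_{j}$ would still contribute to $C^{1,1-s}(D)$, but $\tau^{(j)}$ would then have only $s-1$ circles, so its component of $d^{1}$ would vanish and the rank argument above would break down.
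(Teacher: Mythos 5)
Your argument is correct and is essentially the paper's proof: reduce to $H^{1,-s+1}(D)=0$, note that $C^{0,-s+1}(D)$ is spanned by $X^{\otimes (s-1)}$, and check that $d^{0}$ sends this generator onto the generator $X^{\otimes s}$ of the Seifert-state summand of $C^{1,-s+1}(D)$. The only divergence is your set $J$ and the accompanying rank computation for $d^{1}$: since every crossing of a link diagram joins two \emph{distinct} Seifert circles (the Seifert graph is loopless, indeed bipartite), $J=\emptyset$, so under the hypothesis $C^{1,-s+1}(D)$ is already one-dimensional and equals $\operatorname{im}d^{0}$ --- which is exactly how the paper concludes, without ever having to examine $d^{1}$.
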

\begin{proof}
Let $n$ be the number of crossings of $D$ and $s$ the number of Seifert circles of $D$. By the definition, we obtain 
\begin{align}
\KH^{0, 2g_{3}(D)+\sharp L-4}(L)
&=H^{1, 2g_{3}(D)+\sharp L-1-3-(n-1)+2}(D) \label{compute1}\\
&=H^{1, n-s+1-3-(n-1)+2}(D) \nonumber\\
&=H^{1, -s+1}(D). \nonumber
\end{align}
Let us prove $H^{1, -s+1}(D)=0$. 
Order the crossings of $D$ so that $p$ is the first crossing. 
Define $\varepsilon^{(j)}_{j}:=1$ and $\varepsilon^{(j)}_{i}:=0$ for $i\neq j$. 
Put $\varepsilon^{(j)}:=(\varepsilon^{(j)}_{1}, \varepsilon^{(j)}_{2}, \dots, \varepsilon^{(j)}_{n-1}, \varepsilon^{(j)}_{n})\in \{0,1\}^{n}$ (that is, the elements of $\varepsilon^{(j)}$ are $0$ except the $j$-th element which is $1$). 
Note that the smoothing $D_{\varepsilon^{(1)}}$ is the Seifert smoothing. 
Since there is no crossing of $D$ joining the same two Seifert circles as the two circles which are connected by $p$, the number $k_{\varepsilon^{(j)}}$ of circles of the smoothing $D_{\varepsilon^{(j)}}$ is given as follows: 
\begin{align}
k_{\varepsilon^{(j)}}=
\begin{cases}
s& \text{if}\ j=1, \\
s-2& \text{if}\ j\neq 1.  
\end{cases} \label{no_circle}
\end{align}
From the definition of $H^{1, -s+1}(D)$ and $(\ref{no_circle})$, we have 
\begin{align}
&H^{1, -s+1}(D) \label{compute2}\\
&=\ker d^{1}\cap \{v\in C^{1}(D) \mid deg (v)=-s\}/d^{0}(\{ w \in C^{0}(D) \mid deg (w)=-s+1\}) \nonumber\\
&=\mathbf{Q}\{x^{\otimes s} \in M_{\varepsilon^{(1)}}\}/d^{0}(\mathbf{Q}\{x^{\otimes s-1} \in C^{0}(D)\}). \nonumber
\end{align}
From $(\ref{no_circle})$, we compute 
\begin{align}
d^{0}(x^{\otimes s-1})&=(d_{\mathbf{0}\rightarrow \varepsilon^{(1)}}(x^{\otimes s-1}), d_{\mathbf{0}\rightarrow \varepsilon^{(2)}}(x^{\otimes s-1}), \dots, d_{\mathbf{0}\rightarrow \varepsilon^{(n)}}(x^{\otimes s-1})) \label{compute3}\\
&=(\Delta (x)\otimes x^{\otimes s-2}, 0, \dots, 0) \nonumber \\
&=(x^{\otimes s}, 0, \dots, 0) \in \bigoplus_{j=1}^{n}M_{\varepsilon^{(j)}} (=C^{1}(D)), \nonumber 
\end{align}
(we obtain the second equality because the map $d_{\mathbf{0}\rightarrow \varepsilon^{(1)}}$ is given by the comultiplication map $\Delta$, the maps $d_{\mathbf{0}\rightarrow \varepsilon^{(2)}}, \dots, d_{\mathbf{0}\rightarrow \varepsilon^{(n)}}$ are given by the multiplication maps $m$ and we have $m(x\otimes x)=0$). \par
Hence, by $(\ref{compute1})$, $(\ref{compute2})$ and $(\ref{compute3})$, we obtain 
\begin{align*}
\KH^{0, 2g_{3}(D)+\sharp L-4}(L)&=H^{1, -s+1}(D)\\
&=\mathbf{Q}\{x^{\otimes s} \in M_{\varepsilon^{1}}\}/d^{0}(\mathbf{Q}\{x^{\otimes s-1} \in C^{0}(D)\})\\
&=\mathbf{Q}\{x^{\otimes s}\}/\mathbf{Q}\{x^{\otimes s}\}\\
&=0. 
\end{align*}
\end{proof}
The two results, Theorem~$\ref{stoimenow1}$ and Lemma~$\ref{lem1}$, allow us to prove Theorem~$\ref{main2}$ below. 

\begin{thm}\label{main2}
Let $D$ be an almost positive diagram of a knot $K$ with negative crossing $p$. 
Denote the Rasmussen invariant of $K$ by $s(K)$ and the genus of the Seifert surface obtained from $D$ (by Seifert's algorithm) by $g_{3}(D)$. 
\begin{enumerate}
\item If there is no (positive) crossing joining the same two Seifert circles of $D$ as the two circles which are connected by the negative crossing $p$, we obtain $s(K)=2g_{3}(D)$.  
\item If there is a (positive) crossing joining the same two Seifert circles of $D$ as the two circles which are connected by the negative crossing $p$, we obtain $s(K)=2g_{3}(D)-2$. 
\end{enumerate}
\end{thm}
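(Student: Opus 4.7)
The plan is to sandwich $s(K)$ between $2g_3(D)-2$ and $2g_3(D)$ by Khovanov-homological means, and then separate the two cases using Lemma~\ref{lem1} and Theorem~\ref{stoimenow1}.

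First I would establish the universal lower bound $s(K)\ge 2g_3(D)-2$. The key observation is that in an almost positive diagram the negative crossing $p$ joins two distinct Seifert circles, so its non-oriented resolution is a merge. Hence for each $j\in\{1,\dots,n\}$ the smoothing $D_{\varepsilon^{(j)}}$ is obtained from the Seifert smoothing $D_{\varepsilon^{(1)}}$ by applying two non-oriented resolutions (one at $p$ and one at the $j$-th crossing), the first of which decreases the circle count, so the net change is $-2$ or $0$. Therefore $k_{\varepsilon^{(j)}}\le s$ for every $j$, and the minimum $q$-grading in $C^1(D)$ equals $-s+1$; consequently $H^{1,m}(D)=0$ for all $m<-s+1$. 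Translating via $\KH^{0,q}(K)=H^{1,q-n+3}(D)$ and $2g_3(D)=n-s+1$ this gives
\[
\KH^{0,q}(K)=0\quad\text{for every}\ q\le 2g_3(D)-4.
\]
Combined with $\KH^{0,s(K)-1}(K)\ne 0$ from Proposition~\ref{key}, this forces $s(K)-1\ge 2g_3(D)-3$, i.e.\ $s(K)\ge 2g_3(D)-2$.

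For the upper bound I would use the standard inequality $s(K)\le 2g_4(K)\le 2g_3(K)$ together with Theorem~\ref{stoimenow1}: in Case~(1) it gives $s(K)\le 2g_3(D)$, while in Case~(2) it gives $s(K)\le 2g_3(D)-2$.

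Combining the two bounds settles Case~(2) immediately: $s(K)=2g_3(D)-2$. In Case~(1) the bounds only narrow $s(K)$ down to $\{2g_3(D)-2,\,2g_3(D)\}$. To exclude the smaller value I invoke Lemma~\ref{lem1} with $\sharp L=1$, which gives $\KH^{0,2g_3(D)-3}(K)=0$; if $s(K)$ were equal to $2g_3(D)-2$ then Proposition~\ref{key} would force $\KH^{0,2g_3(D)-3}(K)\ne 0$, a contradiction. Hence $s(K)=2g_3(D)$, finishing the proof. The one delicate step is verifying $k_{\varepsilon^{(j)}}\le s$, which rests on the observation that $p$ is between two distinct Seifert circles (the same implicit hypothesis already used in Theorem~\ref{stoimenow1} and Lemma~\ref{lem1}); once this is recorded, a single degree count delivers the lower bound, Stoimenow's formula provides the upper bound, and Lemma~\ref{lem1} plays the role of a tie-breaker in Case~(1).
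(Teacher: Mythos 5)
Your proof is correct, but it reaches the lower bound $s(K)\ge 2g_{3}(D)-2$ by a genuinely different route than the paper. The paper introduces the positive diagram $D_{+}$ obtained by changing the crossing $p$ and imports two results of Rasmussen: the crossing-change inequality $s(K_{+})-2\le s(K)\le s(K_{+})$ and the equality $s(K_{+})=2g_{3}(D_{+})=2g_{3}(D)$ for positive knots; these give both the lower bound and (in Case (1)) the upper bound $s(K)\le s(K_{+})=2g_{3}(D)$. You instead observe that every smoothing $D_{\varepsilon^{(j)}}$ with $|\varepsilon^{(j)}|=1$ has at most $s$ circles (since the non-oriented resolution at $p$ merges two distinct Seifert circles before the $j$-th resolution can split one), so $C^{1}(D)$ vanishes below $q$-degree $-s+1$, hence $\KH^{0,q}(K)=0$ for $q\le 2g_{3}(D)-4$, and Proposition~\ref{key} then forces $s(K)\ge 2g_{3}(D)-2$; your degree count is verified correctly against the conventions of Section~\ref{def_kh}. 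This buys you a proof that does not use the crossing-change inequality or Rasmussen's positivity theorem at all --- the only external inputs are $s\le 2g_{4}\le 2g_{3}$, Theorem~\ref{stoimenow1} for the upper bounds, and Proposition~\ref{key} --- and it is the natural companion to Lemma~\ref{lem1}, which computes the first nonzero degree $-s+1$ while you note that everything below it vanishes for trivial reasons. The cost is that you must justify the circle count $k_{\varepsilon^{(j)}}\le s$ yourself (which you flag and which is fine, as the two arcs at any Seifert-smoothed crossing always lie on distinct Seifert circles), whereas the paper's argument is shorter once the standard facts about $s$ are granted. The Case (1) tie-breaker via Lemma~\ref{lem1} and Proposition~\ref{key}, and the Case (2) squeeze, coincide with the paper's.
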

\begin{proof}
Let $D_{+}$ be the positive diagram obtained from $D$ by crossing change at $p$ and $K_{+}$ the knot represented by $D_{+}$. 
It is known (see \cite[Corollary~$4.3$, Theorem~$1$ and Theorem~$4$]{rasmussen1}) that we have 
\begin{align}
s(K_{+})-2&\leq s(K)\leq s(K_{+}), \label{1}\\
|s(K)|&\leq 2g_{4}(K)\leq 2g_{3}(K)  \label{2}, \\
s(K_{+})=2g_{4}(K_{+})&=2g_{3}(K_{+})=2g_{3}(D_{+})(=2g_{3}(D)). \label{3}
\end{align}
\par
$(1)$ In the case where there is no (positive) crossing joining the same two Seifert circles as the two circles which are connected by $p$: 
By $(\ref{1})$, we can easily see that $s(K)=s(K_{+})$ or $s(K_{+})-2$ (since $s(K)$ is an even integer for any knot $K$). 
From Lemma~$\ref{lem1}$, Proposition~$\ref{key}$ and $(\ref{3})$, we have $s(K)\neq 2g_{3}(D)-2=s(K_{+})-2$. Hence, we obtain $s(K)=s(K_{+})=2g_{3}(D)$. 
\par
$(2)$ In the case where there is a (positive) crossing joining the same two Seifert circles as the two circles which are connected by $p$: From Theorem~$\ref{stoimenow1}$, $(\ref{2})$ and $(\ref{3})$, we obtain 
\begin{align*}
2g_{3}(D)-2=s(K_{+})-2\leq s(K)\leq 2g_{4}(K)\leq 2g_{3}(K)= 2g_{3}(D)-2. 
\end{align*}
\end{proof}
\begin{proof}[Proof of Theorem~$\ref{main}$]
This immediately follows from Theorems~$\ref{stoimenow1}$ and $\ref{main2}$ and $(\ref{2})$. 
\end{proof}
%
\begin{proof}[Proof of Corollary~$\ref{cor1}$]
Abe \cite[Proof of Theorem~$1.3$]{abe2} proved that a homogeneous knot $K$ satisfying $s(K)=2g_{4}(K)=2g_{3}(K)$ is a positive knot. 
From Theorem~$\ref{main}$, any almost positive knot is not homogeneous.  
\end{proof}
\begin{proof}[Proof of Corollary~$\ref{cor2}$]
Stoimenow \cite[Theorem~$7.1$]{stoimenow2} proved that there is no almost positive knot of $3$-genus one. 
From Theorem~$\ref{main}$, the $4$-genus of any almost positive knot is not one.  
\end{proof}
\begin{rem}
It is natural to consider the following question: ``Are Theorems~$\ref{main}$ and $\ref{main2}$ true for any almost positive link?". 
An answer to the question will be given in \cite{abe-tagami}, where we use the Rasmussen invariant extended to links by Beliakova and Wehrli \cite{cat_colored}. 
\end{rem}
%
%
%
%
%
%
%
%
%
%
\noindent{\bf Acknowledgements: } 
The author is grateful to the referee for his/her comments. 
This work was supported by JSPS KAKENHI Grant number 13J01362. 
%
\bibliographystyle{amsplain}
\bibliography{tagami}

\providecommand{\bysame}{\leavevmode\hbox to3em{\hrulefill}\thinspace}
\providecommand{\MR}{\relax\ifhmode\unskip\space\fi MR }
\providecommand{\MRhref}[2]{%
  \href{http://www.ams.org/mathscinet-getitem?mr=#1}{#2}
}
\providecommand{\href}[2]{#2}
\begin{thebibliography}{10}

\bibitem{abe2}
T.~Abe, \emph{The {R}asmussen invariant of a homogeneous knot}, Proc. Amer.
  Math. Soc. \textbf{139} (2011), no.~7, 2647--2656. \MR{2784833 (2012c:57010)}

\bibitem{abe-tagami}
T.~Abe and K.~Tagami, \emph{The s-invariant for links and its applications},
  preparation (Tokyo Institute of Technology 2014).

\bibitem{baader}
S.~Baader, \emph{Quasipositivity and homogeneity}, Math. Proc. Cambridge
  Philos. Soc. \textbf{139} (2005), no.~2, 287--290. \MR{2168087 (2006g:57008)}

\bibitem{baader2}
\bysame, \emph{Slice and {G}ordian numbers of track knots}, Osaka J. Math.
  \textbf{42} (2005), no.~1, 257--271. \MR{2132015 (2005m:57005)}

\bibitem{banks1}
J.~E. Banks, \emph{Homogeneous links, {S}eifert surfaces, digraphs and the
  reduced {A}lexander polynomial}, Geom. Dedicata \textbf{166} (2013), 67--98.
  \MR{3101161}

\bibitem{Bar-Natan-1}
D.~Bar-Natan, \emph{On {K}hovanov's categorification of the {J}ones
  polynomial}, Algebr. Geom. Topol. \textbf{2} (2002), 337--370 (electronic).
  \MR{1917056 (2003h:57014)}

\bibitem{cat_colored}
A.~Beliakova and S.~Wehrli, \emph{Categorification of the colored {J}ones
  polynomial and {R}asmussen invariant of links}, Canad. J. Math. \textbf{60}
  (2008), no.~6, 1240--1266. \MR{2462446 (2011b:57010)}

\bibitem{homogeneous}
P.~R. Cromwell, \emph{Homogeneous links}, J. London Math. Soc. (2) \textbf{39}
  (1989), no.~3, 535--552. \MR{1002465 (90f:57001)}

\bibitem{hirasawa1}
M.~Hirasawa, \emph{Triviality and splittability of special almost alternating
  links via canonical {S}eifert surfaces}, Topology Appl. \textbf{102} (2000),
  no.~1, 89--100. \MR{1739265 (2001d:57006)}

\bibitem{khovanov1}
M.~Khovanov, \emph{A categorification of the {J}ones polynomial}, Duke Math. J.
  \textbf{101} (2000), no.~3, 359--426. \MR{1740682 (2002j:57025)}

\bibitem{homogeneous-seifert}
P.~M.~G. Manch{\'o}n, \emph{Homogeneous links and the {S}eifert matrix},
  Pacific J. Math. \textbf{255} (2012), no.~2, 373--392. \MR{2928557}

\bibitem{nakamura1}
T.~Nakamura, \emph{Four-genus and unknotting number of positive knots and
  links}, Osaka J. Math. \textbf{37} (2000), no.~2, 441--451. \MR{1772843
  (2001e:57005)}

\bibitem{negative_signature}
J.~H. Przytycki, \emph{Positive knots have negative signature}, Bull. Polish
  Acad. Sci. Math. \textbf{37} (1989), no.~7-12, 559--562 (1990). \MR{1101920
  (92a:57010)}

\bibitem{almost_negative_signatire}
J.~H. Przytycki and K.~Taniyama, \emph{Almost positive links have negative
  signature}, J. Knot Theory Ramifications \textbf{19} (2010), no.~2, 187--289.
  \MR{2647054 (2011d:57041)}

\bibitem{rasmussen1}
J.~Rasmussen, \emph{Khovanov homology and the slice genus}, Invent. Math.
  \textbf{182} (2010), no.~2, 419--447. \MR{2729272}

\bibitem{positive_strong}
L.~Rudolph, \emph{Positive links are strongly quasipositive}, Proceedings of
  the {K}irbyfest ({B}erkeley, {CA}, 1998), Geom. Topol. Monogr., vol.~2, Geom.
  Topol. Publ., Coventry, 1999, pp.~555--562 (electronic). \MR{1734423
  (2000j:57015)}

\bibitem{shumakovitch2}
A.~N. Shumakovitch, \emph{Rasmussen invariant, slice-{B}ennequin inequality,
  and sliceness of knots}, J. Knot Theory Ramifications \textbf{16} (2007),
  no.~10, 1403--1412. \MR{2384833 (2008m:57034)}

\bibitem{stoimenow2}
A.~Stoimenow, \emph{Knots of genus two}, arXiv:math/0303012v1.

\bibitem{stoimenow1}
\bysame, \emph{On polynomials and surfaces of variously positive links}, J.
  Eur. Math. Soc. (JEMS) \textbf{7} (2005), no.~4, 477--509. \MR{2159224
  (2006d:57014)}

\bibitem{tagami2}
K.~Tagami, \emph{The maximal degree of the {K}hovanov homology of a cable
  link}, Algebr. Geom. Topol. \textbf{13} (2013), no.~5, 2845--2896.
  \MR{3116306}

\end{thebibliography}
\end{document}